\documentclass[12pt,reqno]{amsart}
\usepackage{amsmath}
\usepackage[dvips]{graphicx}
\usepackage{amsfonts}
\usepackage{amssymb}
\usepackage{latexsym}
\graphicspath{{Img/}}
\setlength{\baselineskip}{3.0\baselineskip}
\newtheorem{theorem}{Theorem}
\theoremstyle{plain}

\newtheorem{corollary}{Corollary}

\newtheorem{lemma}{Lemma}

\newtheorem{problem}{Problem}
\newtheorem{proposition}{Proposition}
\newtheorem{remark}{Remark}

\numberwithin{equation}{section}

\begin{document}

\title[Analytical solution of the weighted Fermat-Torricelli problem]{An analytical solution of the weighted Fermat-Torricelli problem on the unit sphere}
\author{Anastasios N. Zachos}
\address{University of Patras, Department of Mathematics, GR-26500 Rion, Greece}
\email{azachos@gmail.com} \keywords{Fermat-Torricelli problem,
spherical triangle} \subjclass{52A40,51E10,51M16,52A55}
\begin{abstract}
We obtain an analytical solution for the weighted
Fermat-Torricelli problem for an equilateral geodesic triangle
$\triangle A_{1}A_{2}A_{3}$ which is composed by three equal
geodesic arcs (sides) of length $\frac{\pi}{2}$ for given three
positive unequal weights that correspond to the three vertices on
a unit sphere. This analytical solution is a generalization of
Cockayne's solution given in \cite{Coc:72} for three equal
weights. Furthermore, by applying the geometric plasticity
principle and the spherical cosine law, we derive a necessary
condition for the weighted Fermat-Torricelli point in the form of
three transcedental equations with respect to the length of the
geodesic arcs $A_{1}A_{1}^{\prime},$ $A_{2}A_{2}^{\prime}$ and
$A_{3}A_{3}^{\prime},$ to locate the weighted Fermat-Torricelli
point $A_{0}$ at the interior of a geodesic triangle $\triangle
A_{1}^{\prime}A_{2}^{\prime}A_{3}^{\prime}$ on a unit sphere with
sides less than $\frac{\pi}{2}.$
\end{abstract}\maketitle
\section{Introduction}

Let $\triangle A_{1}A_{2}A_{3}$ be a geodesic triangle and $A_{0}$
a point on a unit sphere.

We denote by $a_{ij}$ the length of the geodesic arc $A_iA_j,$
which is part of a great circle of unit radius and $\alpha_{ikj}$
the angle between the geodesic arcs $A_iA_k$ and $A_kA_j$ for
$i,j,k=0,1,2,3, i\neq j\neq k.$

The  weighted Fermat problem on the unit sphere refers to the
following problem:

\begin{problem}
Consider a positive constant weight $w_i$ that correspond to the
vertex $A_{i},$ for $i=1,2,3.$ Find a point $A_{0}$ (weighted
Fermat point) for which the sum
\begin{equation} \label{eq:obj}
\sum_{i=1}^{3}w_ia_{0i}
\end{equation}
is minimized.
\end{problem}


The existence and uniqueness for the weighted Fermat point on a
convex surface has been studied in \cite{Zach/Zou:08},
\cite{Fletcher:09}, \cite{Cots/Zach:11}, \cite{Zachos/Cots:10}
(see also in \cite{IvanTuzh:092}
\cite[Chapter~II,pp.~208]{BolMa/So:99}).

Concerning some studies that focus on the geometric properties of
the weighted Fermat point on the two dimensional sphere and on a
convex surface we refer to the studies of \cite{Coc:72},
\cite{Dolanetal:91}, \cite{Weng}, \cite{Coc:67},
\cite{IvanTuzh:092}, \cite{NayaInnami:2013} and \cite{Zachos:13b}.

The following results (Proposition 1, 2) characterize the
solutions of the weighted Fermat problem on a $C^{2}$ surface and
they have been proved in \cite{Zachos/Cots:10},\cite{Cots/Zach:11}
proposition 6, page 53 and proposition 7, page 55:

\begin{proposition}[Floating Case]{\cite[Proposition~6, p.~53]{Zachos/Cots:10},\cite{Cots/Zach:11}}
If $\vec{U}_{A_{i}A_{j}}$ is the unit tangent vector of the
geodesic arc $A_{i}A_{j}$ at $A_{i}$ and D is the domain of a
$C^{2}$ surface M bounded by $\triangle A_{1}A_{2}A_{3},$ for
$i,j=1,2,3$
 then the following (I), (II), (III) conditions are equivalent:
(I) All the following inequalities are satisfied simultaneously:
\begin{equation}\label{cond120n}
\left\|
w_{2}\vec{U}_{A_{1}A_{2}}+w_{3}\vec{U}_{A_{1}A_{3}}\right\|>
w_{1},
\end{equation}

\begin{equation}\label{cond1202n}
\left\|
w_{1}\vec{U}_{A_{2}A_{1}}+w_{3}\vec{U}_{A_{2}A_{3}}\right\|>
w_{2},
\end{equation}

\begin{equation}\label{cond1203n}
\left\|
w_{1}\vec{U}_{A_{3}A_{1}}+w_{2}\vec{U}_{A_{3}A_{2}}\right\|>
w_{3},
\end{equation}
(II) The point $A_{0}$ is an interior point of $\triangle
A_{1}A_{2}A_{3}$ (weighted Fermat-Torricelli point) and does not
belong to the geodesic arcs $A_{1}A_{2},$ $A_{2}A_{3}$
and $A_{1}A_{3}.$\\

(III) $\vec{U}_{A_{0}A_{1}}+\vec{U}_{A_{0}A_{2}}+\vec{U}_{A_{0}A_{3}}=\vec{0}.$\\

\end{proposition}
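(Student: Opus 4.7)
The plan is to establish the equivalence via a cycle (III) $\Rightarrow$ (II) $\Rightarrow$ (I) $\Rightarrow$ (III), leveraging the strict convexity of the weighted distance sum $F(P):=\sum_{i=1}^{3} w_{i}\, d(P,A_{i})$ on the geodesically convex domain $D$ bounded by $\triangle A_{1}A_{2}A_{3}$. Continuity of $F$ on the compact set $\overline{D}$ gives existence of a minimizer, while strict convexity gives uniqueness; hence the three conditions should all amount to the assertion that ``the unique minimizer $A_{0}$ lies strictly inside $D$.''

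For the equivalence (II) $\Leftrightarrow$ (III), I would use that on $D\setminus\{A_{1},A_{2},A_{3}\}$ the distance $d(\cdot,A_{i})$ is $C^{1}$ with gradient $-\vec{U}_{PA_{i}}$ pointing away from $A_{i}$, so that $\nabla F(P)=-\sum_{i} w_{i}\vec{U}_{PA_{i}}$. Then the balance condition at an interior point $A_{0}$ is exactly $\nabla F(A_{0})=\vec{0}$, which by convexity is equivalent to $A_{0}$ being the global (hence unique interior) minimizer. For (I) $\Leftrightarrow$ (II), I would carry out a one-sided directional-derivative analysis at each vertex $A_{i}$: along an outgoing unit tangent $\vec{v}\in T_{A_{i}}M$ the derivative of $F$ is $w_{i}-w_{j}\langle\vec{v},\vec{U}_{A_{i}A_{j}}\rangle-w_{k}\langle\vec{v},\vec{U}_{A_{i}A_{k}}\rangle$, where the $+w_{i}$ reflects that $d(\cdot,A_{i})$ grows at unit rate in every direction away from $A_{i}$, and the two inner-product terms come from differentiating the smooth functions $d(\cdot,A_{j})$ and $d(\cdot,A_{k})$ at $A_{i}$. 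Hence $A_{i}$ fails to minimize $F$ iff $\sup_{\|\vec{v}\|=1}\bigl(w_{j}\langle\vec{v},\vec{U}_{A_{i}A_{j}}\rangle+w_{k}\langle\vec{v},\vec{U}_{A_{i}A_{k}}\rangle\bigr)>w_{i}$, and by Cauchy--Schwarz this supremum equals $\|w_{j}\vec{U}_{A_{i}A_{j}}+w_{k}\vec{U}_{A_{i}A_{k}}\|$, giving precisely the three inequalities of (I). A short transverse-direction argument, again via Cauchy--Schwarz, rules out minimizers on the open edges $A_{i}A_{j}$, so (I) is equivalent to the unique minimizer being interior, i.e.\ to (II).

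The main technical obstacle is erecting the convex-analytic infrastructure on the $C^{2}$ surface: one needs $D$ to lie in a totally normal neighborhood so that every two points of $\overline{D}$ are joined by a unique minimizing geodesic (required both for $d(\cdot,A_{i})$ to be $C^{1}$ away from $A_{i}$ with gradient $-\vec{U}_{PA_{i}}$ and for a meaningful notion of geodesic convexity), and one needs strict convexity of $F$ along geodesics in $D$. The latter is not automatic and calls for a second-variation or Jacobi-field comparison showing that each $d(\cdot,A_{i})$ is strictly convex along any geodesic not aimed at $A_{i}$; strict convexity of the sum then follows because at any point at most one of the three radial directions can coincide with a prescribed geodesic direction. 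Once these foundations are in place, the first-variation computations at interior points and at vertices reduce to routine linear algebra in the tangent spaces $T_{P}M$.
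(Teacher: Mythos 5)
The paper contains no proof of this proposition to compare yours against: it is imported verbatim (with citation) from \cite{Zachos/Cots:10} and \cite{Cots/Zach:11}, and in the present article it functions purely as a quoted prerequisite. Judged on its own, your first-variation ingredients are the right local computations: the gradient $-\vec{U}_{PA_{i}}$ of $d(\cdot,A_{i})$ away from $A_{i}$, the one-sided derivative $w_{i}-w_{j}\langle\vec{v},\vec{U}_{A_{i}A_{j}}\rangle-w_{k}\langle\vec{v},\vec{U}_{A_{i}A_{k}}\rangle$ at a vertex, the Cauchy--Schwarz identification of the steepest descent rate with $\|w_{j}\vec{U}_{A_{i}A_{j}}+w_{k}\vec{U}_{A_{i}A_{k}}\|$, and the transverse-direction argument excluding minimizers on open edges. (You also silently repair condition (III), which as printed omits the weights; the balance condition must be the weighted one, $w_{1}\vec{U}_{A_{0}A_{1}}+w_{2}\vec{U}_{A_{0}A_{2}}+w_{3}\vec{U}_{A_{0}A_{3}}=\vec{0}$, and your $\nabla F(A_{0})=\vec{0}$ is exactly that.)

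The genuine gap is the convex-analytic backbone the whole cycle hangs on. On a general $C^{2}$ surface the functions $d(\cdot,A_{i})$ are not geodesically convex: the Hessian of a distance function carries a curvature comparison factor that changes sign in positive curvature (on the unit sphere it is $\cot d$ times the projection orthogonal to the radial direction, hence negative as soon as $d>\pi/2$), minimizing geodesics need not be unique, and conjugate points destroy convexity even locally. So the deferred lemma you rely on --- strict convexity of each $d(\cdot,A_{i})$ along every geodesic not aimed at $A_{i}$ --- is false in the stated generality, and with it collapse the two steps that actually need it: uniqueness of the minimizer, and above all the implication (II) $\Rightarrow$ (I). There you must show that if, say, $\|w_{2}\vec{U}_{A_{1}A_{2}}+w_{3}\vec{U}_{A_{1}A_{3}}\|\le w_{1}$, then the minimizer sits at $A_{1}$ and hence is not interior; without convexity, nonnegativity of all one-sided derivatives at $A_{1}$ only makes $A_{1}$ a critical (locally minimal) vertex, not a global minimizer, so the absorbed case cannot be dispatched this way. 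Note also that existence and uniqueness are treated in the literature as separate results under additional hypotheses on the surface (the present paper cites them independently of this proposition), rather than as consequences of global strict convexity of $F$. To make your route work you must either restrict to a regime where the convexity radius genuinely covers $\overline{D}$ and all distances $d(\cdot,A_{i})$ stay below it (for the sphere, roughly a triangle contained in an open hemisphere with the relevant distances under $\pi/2$, which would cover this paper's application but not the proposition as stated for arbitrary $C^{2}$ surfaces), or replace the convexity argument for (II) $\Rightarrow$ (I) by a direct variational treatment of the absorbed case.
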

\begin{proposition}[Absorbed Case]{\cite[Proposition~7, p.~55]{Zachos/Cots:10},\cite{Cots/Zach:11}} The following (I), (II)
conditions are equivalent.\\(I) One of the following inequalities
is satisfied:
\begin{equation}\label{cond120}
\left\|
w_{2}\vec{U}_{A_{1}A_{2}}+w_{3}\vec{U}_{A_{1}A_{3}}\right\|\le
w_{1},
\end{equation}
or
\begin{equation}\label{cond1202}
\left\|
w_{1}\vec{U}_{A_{2}A_{1}}+w_{A_{3}}\vec{U}_{A_{2}A_{3}}\right\|\le
w_{2},
\end{equation}
or
\begin{equation}\label{cond1203}
\left\|
w_{1}\vec{U}_{A_{3}A_{1}}+w_{B}\vec{U}_{A_{3}A_{2}}\right\|\le
w_{3}.
\end{equation}

(II) The point $A_{0}$ (weighted Fermat-Cavalieri point) is
attained at $A_{1}$ or $A_{2}$ or $A_{3},$ respectively.
\end{proposition}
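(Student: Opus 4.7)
The plan is to establish the equivalence by examining the one-sided directional derivatives of the objective $f(P) := \sum_{i=1}^{3} w_i\, d(A_i, P)$ at a candidate vertex. Proposition 2 is the complement of Proposition 1 (which characterises when the minimiser lies in the interior of the geodesic triangle), so the same first-variation machinery should pinpoint \emph{which} vertex absorbs the minimiser when the interior condition fails.

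Fix a vertex, say $A_1$, and an arbitrary unit tangent vector $\vec{v}$ to $M$ at $A_1$. For $j = 2, 3$, the geodesic distance $d(A_j, \cdot)$ is smooth near $A_1$ (since $A_1 \ne A_j$) and its directional derivative at $A_1$ along $\vec{v}$ equals $-\vec{v} \cdot \vec{U}_{A_1A_j}$, by the standard formula for the gradient of the distance function on a Riemannian manifold. The function $d(A_1, \cdot)$ is non-smooth at $A_1$, but its one-sided derivative in any unit direction is $+1$. Summing with weights gives
\begin{equation*}
D_{\vec{v}} f(A_1) \;=\; w_1 \;-\; \vec{v} \cdot \bigl(w_2 \vec{U}_{A_1A_2} + w_3 \vec{U}_{A_1A_3}\bigr).
\end{equation*}

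For the direction (I)~$\Rightarrow$~(II), assume the inequality at $A_1$. Cauchy--Schwarz yields $D_{\vec{v}} f(A_1) \ge 0$ for every unit $\vec{v}$, so $A_1$ is a local minimiser. I would then invoke the convexity of the geodesic distance on the geodesically convex domain $D$ (the same convexity that underlies the existence/uniqueness theory of \cite{Zachos/Cots:10, Cots/Zach:11}) to promote local minimality to global minimality, giving $A_0 = A_1$; the cases $A_0 = A_2$ and $A_0 = A_3$ follow by symmetric relabelling. Conversely, for (II)~$\Rightarrow$~(I), if $A_0 = A_1$ is a minimiser then $D_{\vec{v}} f(A_1) \ge 0$ for every unit $\vec{v}$, and choosing $\vec{v}$ aligned with the vector $w_2 \vec{U}_{A_1A_2} + w_3 \vec{U}_{A_1A_3}$ (when nonzero; the vanishing case is trivial) yields the required inequality.

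The main obstacle is the upgrade from local to global minimality, since the first-variation calculation only yields local information at the corner. Overcoming it hinges on the convexity of each $d(A_i, \cdot)$ on the geodesically convex domain $D \subset M$, which is precisely the ingredient driving the cited existence/uniqueness proofs; I would quote those convexity lemmas rather than reprove them. A minor secondary issue is the careful handling of one-sided derivatives at the non-smooth point $A_1$, but this is standard once one uses the tangent cone at $A_1$ (which, on a $C^2$ surface, is the full tangent plane).
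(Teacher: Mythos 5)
You should first be aware that this paper contains no proof of the Absorbed Case: it is imported verbatim from \cite{Zachos/Cots:10} (Proposition~7, p.~55) and the corrigendum \cite{Cots/Zach:11}, so there is no internal argument to compare yours with; your first-variation scheme is, however, the standard route and is in the spirit of the variational proofs in the cited source. The skeleton is right: the one-sided derivative $D_{\vec v}f(A_1)=w_1-\vec v\cdot\bigl(w_2\vec U_{A_1A_2}+w_3\vec U_{A_1A_3}\bigr)$, Cauchy--Schwarz for (I)$\Rightarrow$(II), and the aligned direction (with the zero-vector case trivial) for (II)$\Rightarrow$(I).

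Two points need tightening before this is a proof. (a) The inference ``$D_{\vec v}f(A_1)\ge 0$ for every unit $\vec v$, hence $A_1$ is a local minimiser'' is not valid for a nonsmooth function in general, and it is genuinely insufficient in the equality case that (I) permits, where the derivative vanishes in the aligned direction and no first-order information remains. The clean fix is to skip ``local'' altogether: join an arbitrary point $P$ of the domain to $A_1$ by a geodesic $\gamma$, observe that $t\mapsto f(\gamma(t))$ is convex with nonnegative right derivative at $t=0$, hence nondecreasing, so $f(P)\ge f(A_1)$ directly. (b) That convexity is exactly the place where your argument is a conditional appeal rather than a proof: on an arbitrary $C^{2}$ surface $M$, which is the setting of the statement, $d(A_i,\cdot)$ need not be convex (convexity requires curvature/size restrictions), and the cited existence/uniqueness results are not simply ``convexity lemmas'' one can quote. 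In the situation actually used in this paper --- the unit sphere with the equilateral triangle of side $\frac{\pi}{2}$, where all relevant distances stay $\le\frac{\pi}{2}$ --- the distance functions $d(A_i,\cdot)$ are indeed (weakly) convex along geodesics of the triangle, so your argument closes there; but for the proposition as stated you would either have to restrict the hypotheses or replace the local-to-global step by the kind of monotonicity-along-geodesics argument used in \cite{Zachos/Cots:10}, rather than cite convexity that may fail.
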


We note that there is no analytical solution with respect to the
weighted Fermat-Torricelli problem on the unit sphere, except of
Cockayne's solution given in \cite{Coc:72} for an equilateral
geodesic triangle having sides with length $\frac{pi}{2}$ for
three equal weights.

In this paper, we find an analytical solution of the weighted
Fermat-Torricelli problem for an equilateral geodesic triangle on
a unit sphere which is composed by three equal geodesic arcs of
length $\frac{\pi}{2},$ by using as variables the two angles of
longitude and latitude from the spherical coordinates and by
applying the spherical sine law in some specific geodesic
triangles (Theorem~\ref{theor2}). The geometric plasticity
principle which has been proved in \cite{Zachos:13b} yields a
class of geodesic triangles such that the corresponding weighted
Fermat-Torricelli point remains the same. By applying the
geometric plasticity principle, we find a class of geodesic
triangles by using the cosine law on the unit sphere, such that
the weighted Fermat-Torricelli point is the same with the weighted
Fermat-Torricelli point which corresponds to the equilateral
geodesic triangle. Finally, by applying the geometric plasticity
principle and the spherical cosine law, we derive a necessary
condition for the weighted Fermat-Torricelli point in the form of
three transcedental equations with respect to some specific three
length of geodesic arcs to locate the weighted Fermat-Torricelli
point $A_{0}$ at the interior of a geodesic triangle $\triangle
A_{1}^{\prime}A_{2}^{\prime}A_{3}^{\prime}$ on a unit sphere with
sides less than $\frac{\pi}{2}$
(Proposition~\ref{computeweightedFermatTorricelli}).


\section{Analytical solution of the weighted Fermat-Torricelli problem on the unit sphere}

Let $\triangle A_{1}A_{2}A_{3}$ be a geodesic triangle on the unit
sphere $S:x^2+y^2+z^2=1.$ such that
$a_{12}=a_{23}=a_{31}=\frac{\pi}{2}$ and $A_{1}=(1,0,0),$
$A_{2}=(0,1,0),$ $A_{3}=(0,0,1).$

\begin{figure} \label{fig:tas}
\centering
\includegraphics[scale=0.80]{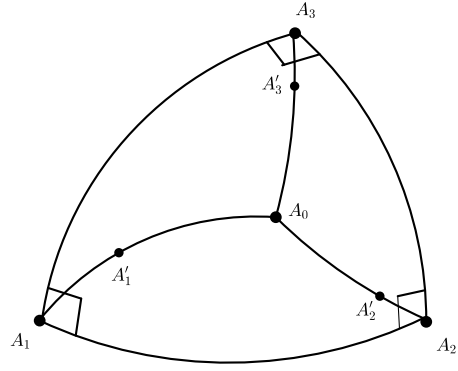}
\caption{Location of the weighted Fermat-Torricelli point for a
geodesic triangle on the unit sphere}
\end{figure}

\begin{lemma}{\cite[Theorem~1]{Zach/Zou:08},\cite{Zachos:13}}\label{theor1}
If $A_{0}$ is the weighted Fermat-Torricelli point of $\triangle
A_{1}A_{2}A_{3},$ then each angle $\alpha_{i0j}$ is expressed as a
function of $w_{1},$ $w_{2}$ and $w_{3}:$
\begin{equation}\label{cosi0j}
\alpha_{i0j}=\arccos\left(\frac{w_{k}^2-w_{i}^2-w_{j}^2}{2w_{i}w_{j}}\right)
\end{equation}
for $i,j,k=1,2,3,$ and $k\ne i\ne j.$
\end{lemma}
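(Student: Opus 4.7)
The plan is to derive the three cosine identities directly from the first-order (vanishing-gradient) condition for the objective $\sum_i w_i a_{0i}$ at the weighted Fermat-Torricelli point. The appropriate form of this condition on a smooth surface, which is the weighted version of condition (III) in Proposition~1, states that at the minimizer $A_{0}$
$$w_{1}\vec{U}_{A_{0}A_{1}}+w_{2}\vec{U}_{A_{0}A_{2}}+w_{3}\vec{U}_{A_{0}A_{3}}=\vec{0}$$
inside the tangent plane $T_{A_{0}}S$. This is precisely the gradient of the weighted distance sum on $S$ at $A_{0}$, and in the floating case it is guaranteed to vanish.

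The key geometric observation I would use is that the three unit tangent vectors $\vec{U}_{A_{0}A_{i}}$ all live in the two-dimensional inner-product space $T_{A_{0}}S$, and that the Euclidean angle between $\vec{U}_{A_{0}A_{i}}$ and $\vec{U}_{A_{0}A_{j}}$ equals, by definition, the spherical angle $\alpha_{i0j}$ formed at $A_{0}$ by the geodesic arcs $A_{0}A_{i}$ and $A_{0}A_{j}$. This identification is the one point that has to be made carefully, because everything that follows is planar vector algebra carried out inside $T_{A_{0}}S$.

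Given this, I would isolate one of the summands, for instance
$$w_{1}\vec{U}_{A_{0}A_{1}}+w_{2}\vec{U}_{A_{0}A_{2}}=-\,w_{3}\vec{U}_{A_{0}A_{3}},$$
and take squared norms of both sides in $T_{A_{0}}S$. The left-hand side expands to $w_{1}^{2}+w_{2}^{2}+2w_{1}w_{2}\cos\alpha_{102}$, and the right-hand side equals $w_{3}^{2}$. Solving for the cosine immediately yields
$$\cos\alpha_{102}=\frac{w_{3}^{2}-w_{1}^{2}-w_{2}^{2}}{2w_{1}w_{2}},$$
which is the stated formula in the case $(i,j,k)=(1,2,3)$. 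Repeating the argument after isolating $w_{1}\vec{U}_{A_{0}A_{1}}$ and $w_{2}\vec{U}_{A_{0}A_{2}}$, respectively, produces the analogous identities for $\alpha_{201}$ and $\alpha_{102}$ in the remaining two index assignments.

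The main obstacle is not algebraic but conceptual: justifying the use of the weighted balance condition at $A_{0}$ and the identification of spherical angles between geodesics at $A_{0}$ with Euclidean angles between their unit tangent vectors in $T_{A_{0}}S$. Once these are in place, the rest of the proof is a single line of scalar-product expansion, and no spherical law of cosines or sines is needed at this stage.
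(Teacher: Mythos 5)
Your proposal is correct and is essentially the argument behind the cited result: the paper does not reprove Lemma~\ref{theor1} but quotes it from \cite{Zach/Zou:08} and \cite{Zachos:13}, where it is obtained exactly as you describe, from the weighted equilibrium condition $w_{1}\vec{U}_{A_{0}A_{1}}+w_{2}\vec{U}_{A_{0}A_{2}}+w_{3}\vec{U}_{A_{0}A_{3}}=\vec{0}$ in the tangent plane $T_{A_{0}}S$ (note that Proposition~1(III) as printed omits the weights, which you correctly restore) followed by isolating one term and expanding squared norms. The only slip is a labelling one: isolating $w_{1}\vec{U}_{A_{0}A_{1}}$ and $w_{2}\vec{U}_{A_{0}A_{2}}$ yields the identities for $\alpha_{203}$ and $\alpha_{103}$, respectively, not for $\alpha_{201}$ and $\alpha_{102}$.
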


We start by expressing the position of the weighted
Fermat-Torricelli point $A_{0}=(x,y,z)$ in terms of the spherical
coordinates $(\omega,\varphi):$

$A_{0}=(\cos\omega cos\varphi, \cos\omega sin\varphi,\sin\omega).$

\begin{theorem}\label{theor2}
The analytical solution of the weighted Fermat-Torricelli problem
of $\triangle A_{1}A_{2}A_{3}$ on the unit sphere is given by the
following two relations:

\begin{equation}\label{explicit1}
\varphi=\arccos\left(\sqrt\frac{w_{1}^2+w_{3}^2-w_{2}^2}{2w_{3}^{2}}\right)
\end{equation}

and

\begin{equation}\label{explicit2}
\omega=\arccos\left(\sqrt{\frac{w_{1}^2+w_{2}^2-w_{3}^2}{2w_{1}w_{2}\sin\left(\arccos\left(\frac{w_{3}^2-w_{1}^2-w_{2}^2}{2w_{1}w_{2}}\right)\right)\sin\left(\arccos\left(\frac{w_{2}^2-w_{1}^2-w_{3}^2}{2w_{1}w_{3}}\right)\right)}}
\right),
\end{equation}

which yield the exact location of the weighted Fermat-Torricelli
point $A_{0}.$

\end{theorem}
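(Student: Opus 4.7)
The plan is to parametrise $A_{0}$ in spherical coordinates, compute the three geodesic arc lengths $a_{0i}$ by inner product with the axis vertices, and exploit the three sub-triangles $\triangle A_{0}A_{i}A_{j}$---each with one side of length $\pi/2$---to set up a solvable system in $\omega$ and $\varphi$. The substitution $A_{0}=(\cos\omega\cos\varphi,\cos\omega\sin\varphi,\sin\omega)$ gives immediately
\[
\cos a_{01}=\cos\omega\cos\varphi,\qquad \cos a_{02}=\cos\omega\sin\varphi,\qquad \cos a_{03}=\sin\omega,
\]
whose squares sum to $1$, reflecting the unit-sphere constraint.

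Next, I would apply the spherical cosine law in each sub-triangle $\triangle A_{0}A_{i}A_{j}$. Because $\cos a_{ij}=\cos(\pi/2)=0$, it collapses to the compact identity $\cos\alpha_{i0j}=-\cot a_{0i}\cot a_{0j}$, equivalently
\[
\cos^{2}a_{0i}\cos^{2}a_{0j}=\cos^{2}\alpha_{i0j}\,(1-\cos^{2}a_{0i})(1-\cos^{2}a_{0j}).
\]
Writing $u=\cos^{2}a_{01}$, $v=\cos^{2}a_{02}$, $w=\cos^{2}a_{03}$ and inserting the values $\cos\alpha_{i0j}=(w_{k}^{2}-w_{i}^{2}-w_{j}^{2})/(2w_{i}w_{j})$ from Lemma~\ref{theor1}, one obtains three rational equations in $(u,v,w)$. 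Multiplying any two of them and dividing by the third isolates each of $u/(1-u),v/(1-v),w/(1-w)$ as an explicit rational function of the weights; solving yields
\[
u=\frac{(w_{1}^{2}+w_{3}^{2}-w_{2}^{2})(w_{1}^{2}+w_{2}^{2}-w_{3}^{2})}{H},\qquad v=\frac{(w_{2}^{2}+w_{3}^{2}-w_{1}^{2})(w_{1}^{2}+w_{2}^{2}-w_{3}^{2})}{H},
\]
and similarly for $w$, where $H=-w_{1}^{4}-w_{2}^{4}-w_{3}^{4}+2w_{1}^{2}w_{2}^{2}+2w_{1}^{2}w_{3}^{2}+2w_{2}^{2}w_{3}^{2}$ is sixteen times the squared Euclidean area of the triangle with side lengths $w_{1},w_{2},w_{3}$.

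The two assertions of the theorem are then extracted separately. From the parametrisation $\tan^{2}\varphi=v/u$, and after the common factor $(w_{1}^{2}+w_{2}^{2}-w_{3}^{2})$ cancels, $\cos^{2}\varphi=u/(u+v)$ reduces directly to $(w_{1}^{2}+w_{3}^{2}-w_{2}^{2})/(2w_{3}^{2})$, which is \eqref{explicit1}. For the $\omega$-coordinate, $\sin\omega=\cos a_{03}$ gives $\cos^{2}\omega=1-w=u+v$; the final step is to recognise the Heron-type identity $4w_{i}^{2}w_{j}^{2}\sin^{2}\alpha_{i0j}=H$ for every pair $(i,j)$---which follows from Lemma~\ref{theor1} by direct computation of $1-\cos^{2}\alpha_{i0j}$---and to use it to rewrite the denominator of $\cos^{2}\omega$ as a product of two sines of angles at $A_{0}$, producing formula \eqref{explicit2}.

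The main obstacle is the algebraic collapse in the last step: the raw expression for $\cos^{2}\omega=u+v$ is a bulky rational function in $w_{1}^{2},w_{2}^{2},w_{3}^{2}$, and seeing that both its numerator and denominator factor through the common polynomial $H$ (so that the denominator becomes a product of two $\sin\alpha_{i0j}$ factors of the form demanded by the statement) requires careful bookkeeping. Sign tracking must also be handled, since in the floating case all three $\cos\alpha_{i0j}$ are negative and the squaring used to solve the $(u,v,w)$ system can introduce extraneous roots; these are discarded by appealing to the interior-point characterisation of the floating case in Proposition~1.
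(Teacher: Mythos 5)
Your argument is essentially correct, but it takes a genuinely different route from the paper. The paper applies the spherical \emph{sine} law in the three sub-triangles $\triangle A_{1}A_{0}A_{3}$, $\triangle A_{1}A_{0}A_{2}$, $\triangle A_{2}A_{0}A_{3}$, uses the right angles of the octant at $A_{1},A_{2},A_{3}$ to pair complementary vertex angles (e.g.\ $\alpha_{012}=\tfrac{\pi}{2}-\alpha_{013}$), and then squares and adds the resulting relations to get the two equations \eqref{fundamental1}--\eqref{fundamental2}; subtracting them gives \eqref{explicit1}, and adding them together with $\sin^{2}\alpha_{102}=\sin^{2}(\alpha_{103}+\alpha_{203})$ gives the $\omega$-formula. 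You instead exploit the collapsed \emph{cosine} law $\cos\alpha_{i0j}=-\cot a_{0i}\cot a_{0j}$ (valid because $a_{ij}=\tfrac{\pi}{2}$), turn Lemma~\ref{theor1} into the multiplicative system $\cot a_{0i}\cot a_{0j}=-\cos\alpha_{i0j}$, and solve it in closed form. Your values $u=\frac{(w_{1}^{2}+w_{2}^{2}-w_{3}^{2})(w_{1}^{2}+w_{3}^{2}-w_{2}^{2})}{H}$, $v=\frac{(w_{1}^{2}+w_{2}^{2}-w_{3}^{2})(w_{2}^{2}+w_{3}^{2}-w_{1}^{2})}{H}$, $w=\frac{(w_{1}^{2}+w_{3}^{2}-w_{2}^{2})(w_{2}^{2}+w_{3}^{2}-w_{1}^{2})}{H}$ are correct (they sum to $1$), they deliver the geodesic distances $a_{0i}$ as a byproduct, and $\cos^{2}\varphi=u/(u+v)=\frac{w_{1}^{2}+w_{3}^{2}-w_{2}^{2}}{2w_{3}^{2}}$ is exactly \eqref{explicit1}. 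Your sign discussion is also sound: since the interior of the octant has $x,y,z>0$, all $a_{0i}<\tfrac{\pi}{2}$, so the positive roots are the relevant ones.

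The one step you should not wave through is the claim that the rewriting ``produces formula \eqref{explicit2}.'' Your (correct) value is $\cos^{2}\omega=u+v=\frac{2w_{3}^{2}(w_{1}^{2}+w_{2}^{2}-w_{3}^{2})}{H}$, and with the Heron identity $4w_{i}^{2}w_{j}^{2}\sin^{2}\alpha_{i0j}=H$ it becomes $\cos^{2}\omega=\frac{w_{1}^{2}+w_{2}^{2}-w_{3}^{2}}{2w_{1}w_{2}\sin\alpha_{103}\sin\alpha_{203}}$, i.e.\ the second sine factor is $\sin\bigl(\arccos\tfrac{w_{1}^{2}-w_{2}^{2}-w_{3}^{2}}{2w_{2}w_{3}}\bigr)$. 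Formula \eqref{explicit2} as printed has $\sin\bigl(\arccos\tfrac{w_{3}^{2}-w_{1}^{2}-w_{2}^{2}}{2w_{1}w_{2}}\bigr)\sin\bigl(\arccos\tfrac{w_{2}^{2}-w_{1}^{2}-w_{3}^{2}}{2w_{1}w_{3}}\bigr)$ in the denominator, which evaluates to $\frac{2w_{1}w_{3}(w_{1}^{2}+w_{2}^{2}-w_{3}^{2})}{H}$ and agrees with your expression only when $w_{1}=w_{3}$ (in particular for equal weights, which is why the check against Cockayne's case does not detect it). Redoing the paper's own addition of \eqref{fundamental1} and \eqref{fundamental2} with the identity $\sin^{2}\alpha_{102}=\sin^{2}(\alpha_{103}+\alpha_{203})$ gives $\cos^{2}\omega=\frac{-\cos\alpha_{102}}{\sin\alpha_{103}\sin\alpha_{203}}$, which is precisely your version; so the mismatch is an index slip in the printed statement rather than an error in your derivation, but your write-up must display the final denominator explicitly (with $\sin\alpha_{103}\sin\alpha_{203}$) instead of asserting agreement with \eqref{explicit2} verbatim.
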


\begin{proof}
The location of $A_{0}=(\cos\omega cos\varphi, \cos\omega
sin\varphi,\sin\omega).$ is determined by $\omega$ and $\varphi.$

We proceed by calculating $\omega$ and $\varphi$ with respect to
the given positive weights $w_{1},$ $w_{2}$ and $w_{3}.$

By applying the sine law in $\triangle A_{1}A_{0}A_{3},$
$\triangle A_{1}A_{0}A_{2}$ and $\triangle A_{2}A_{0}A_{3},$ we
get, respectively:

\begin{equation}\label{sinlaw1}
\frac{1}{\sin \alpha_{103}}=\frac{\sin a_{03}}{\sin
\alpha_{013}}=\frac{\sin a_{01}}{\sin \alpha_{130}},
\end{equation}

\begin{equation}\label{sinlaw2}
\frac{1}{\sin \alpha_{102}}=\frac{\sin a_{02}}{\sin
(\frac{\pi}{2}-\alpha_{013})}=\frac{\sin a_{01}}{\sin
\alpha_{120}}
\end{equation}

and

\begin{equation}\label{sinlaw3}
\frac{1}{\sin \alpha_{203}}=\frac{\sin a_{03}}{\sin
(\frac{\pi}{2}-\alpha_{120})}=\frac{\sin a_{02}}{\sin
(\frac{\pi}{2}-\alpha_{130})}.
\end{equation}


By taking the orthogonal projection of $A_{0}$ with respect to the
$xy$ plane and by using the Euclidean sine law, we express
$a_{01},$ $a_{02}$ and $a_{03}$ as functions of $\omega$ and
$\varphi:$

\begin{equation}\label{impsphere1}
\cos a_{03}=\sin \omega,
\end{equation}

\begin{equation}\label{impsphere2}
\cos a_{01}=\cos \omega \cos\varphi
\end{equation}

and

\begin{equation}\label{impsphere3}
\cos a_{02}=\cos \omega \sin\varphi.
\end{equation}


By replacing (\ref{impsphere1}), (\ref{impsphere2}) and
(\ref{impsphere3}) in (\ref{sinlaw1}), (\ref{sinlaw2}) and
(\ref{sinlaw3}) we obtain:

\begin{equation}\label{sinlaw1bis}
\frac{1}{\sin \alpha_{103}}=\frac{\cos \omega}{\sin
\alpha_{013}}=\frac{\sqrt{1-\cos^{2}\omega \cos^{2}\varphi}}{\sin
\alpha_{130}},
\end{equation}

\begin{equation}\label{sinlaw2bis}
\frac{1}{\sin \alpha_{102}}=\frac{\sqrt{1-\cos^{2}\omega
\sin^{2}\varphi}}{\cos \alpha_{013}}=\frac{\sqrt{1-\cos^{2}\omega
\cos^{2}\varphi}}{\sin \alpha_{120}}
\end{equation}

and

\begin{equation}\label{sinlaw3bis}
\frac{1}{\sin \alpha_{203}}=\frac{\cos \omega}{\cos
\alpha_{120}}=\frac{\sqrt{1-\cos^{2}\omega \sin^{2}\varphi}}{\cos
\alpha_{130}}.
\end{equation}

From (\ref{sinlaw1bis}) we get:

\begin{equation}\label{sin013}
\sin \alpha_{013}=\sin \alpha_{103}\cos \omega
\end{equation}

and

\begin{equation}\label{sin130}
\sin \alpha_{130}=\sin \alpha_{103}\sqrt{1-\cos^{2}\omega
\cos^{2}\varphi}.
\end{equation}

From (\ref{sinlaw2bis}) we get:

\begin{equation}\label{cos013}
\cos \alpha_{013}=\sin \alpha_{102}\sqrt{1-\cos^{2}\omega
\sin^{2}\varphi}
\end{equation}

and

\begin{equation}\label{sin120}
\sin \alpha_{120}=\sin \alpha_{102}\sqrt{1-\cos^{2}\omega
\cos^{2}\varphi}.
\end{equation}

From (\ref{sinlaw3bis}) we get:

\begin{equation}\label{cos120}
\cos \alpha_{120}=\sin \alpha_{203}\cos \omega
\end{equation}

and

\begin{equation}\label{cos130}
\cos \alpha_{130}=\sin \alpha_{203}\sqrt{1-\cos^{2}\omega
\sin^{2}\varphi}.
\end{equation}

By squaring both parts of (\ref{sin013}) and (\ref{cos013}) and by
adding the two derived equations we obtain:

\begin{equation}\label{fundamental1}
\sin^{2}\alpha_{103}\cos^{2}\omega+\sin^{2}\alpha_{102}(1-\cos^{2}\omega\sin^{2}\varphi)=1.
\end{equation}

By squaring both parts of (\ref{sin130}) and (\ref{cos130}) and by
adding the two derived equations we obtain:

\begin{equation}\label{fundamental2}
\sin^{2}\alpha_{203}\cos^{2}\omega+\sin^{2}\alpha_{102}(1-\cos^{2}\omega\cos^{2}\varphi)=1.
\end{equation}

By subtracting (\ref{fundamental2}) from (\ref{fundamental1}) and
taking into account (\ref{cosi0j}) from lemma~1 we derive:

\begin{equation}\label{fundamental3}
\cos2\varphi=\frac{B_{1}^2-B_{2}^2}{B_{3}^2}
\end{equation}

which yields (\ref{explicit1}).

By adding (\ref{fundamental2}) from (\ref{fundamental1}) and
taking into account the trigonometric identity
\[\sin^{2}\alpha_{102}=\sin^{2}\alpha_{203}\cos^{2}\alpha_{103}+\cos^{2}\alpha_{203}\sin^{2}\alpha_{103}+2\sin\alpha_{203}\cos\alpha_{203}\sin\alpha_{103}\cos\alpha_{103}\]
and (\ref{cosi0j}) from lemma~1 we derive (\ref{explicit2}).

\end{proof}

\begin{corollary}
If $w_{1}=w_{2}=w_{3},$ then $\omega=\frac{\pi}{4}$ and
$\varphi=\arccos{\sqrt{\frac{2}{3}}}$ and
$A_{0}=\left(\frac{1}{\sqrt{3}},\frac{1}{\sqrt{3}},\frac{1}{\sqrt{3}}\right)$
\end{corollary}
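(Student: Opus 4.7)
The plan is to substitute $w_1=w_2=w_3$ directly into the two closed forms (\ref{explicit1}) and (\ref{explicit2}) supplied by Theorem~\ref{theor2}, simplify, and then translate the resulting pair $(\omega,\varphi)$ back to Cartesian coordinates via the parametrization $A_0=(\cos\omega\cos\varphi,\cos\omega\sin\varphi,\sin\omega)$. Because all of the geometric work (the applications of the sine law, the bookkeeping equations (\ref{fundamental1})--(\ref{fundamental2}), and the identification of the angles $\alpha_{i0j}$) has already been packaged into the theorem, the corollary reduces to a pure simplification and I do not anticipate any substantive obstacle beyond careful algebra.

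For (\ref{explicit1}), setting $w_1=w_2=w_3=w$ collapses the numerator $w_1^2+w_3^2-w_2^2$ to $w^2$ against a denominator of $2w^2$, so the outer $\arccos$ returns $\arccos(1/\sqrt{2})=\pi/4$. For (\ref{explicit2}) I would first invoke Lemma~\ref{theor1}: each ratio $(w_k^2-w_i^2-w_j^2)/(2w_iw_j)$ reduces to $-1/2$, so the two inner arccosines both equal $2\pi/3$ and their sines both equal $\sqrt{3}/2$. The argument of the outer square root then simplifies to $w^2/(2w^2\cdot 3/4)=2/3$, yielding $\arccos\sqrt{2/3}$.

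Finally, I would feed these two values into the spherical parametrization to recover the Cartesian coordinates of $A_0$. With one angle equal to $\pi/4$ (so $\cos=\sin=1/\sqrt{2}$) and the other equal to $\arccos\sqrt{2/3}$ (so $\cos=\sqrt{2/3}$, $\sin=1/\sqrt{3}$), the three Cartesian components each evaluate to $1/\sqrt{3}$, matching the symmetric centroid direction expected from the three-fold symmetry of the equilateral configuration. The one place requiring a little care, and where I would expect the only minor snag, is the assignment of the two computed angles to the latitude $\omega$ versus the longitude $\varphi$ under the convention $A_3=(0,0,1)$ fixed in Theorem~\ref{theor2}; the consistency check against $A_0=(1/\sqrt{3},1/\sqrt{3},1/\sqrt{3})$ pins this assignment down uniquely and completes the argument.
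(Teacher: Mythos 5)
Your route is the same as the paper's: substitute $w_1=w_2=w_3$ into (\ref{explicit1}) and (\ref{explicit2}) and read the Cartesian point off the parametrization $A_0=(\cos\omega\cos\varphi,\cos\omega\sin\varphi,\sin\omega)$, and your arithmetic is right (the first formula gives $\arccos(1/\sqrt{2})=\pi/4$; in the second the inner arccosines are $2\pi/3$ with sines $\sqrt{3}/2$, giving $\arccos\sqrt{2/3}$). One correction, however: the ``assignment of the two computed angles to latitude versus longitude'' that you leave to a consistency check is not actually free. Equation (\ref{explicit1}) is by definition the formula for $\varphi$ and (\ref{explicit2}) the formula for $\omega$, so the substitution yields $\varphi=\pi/4$ and $\omega=\arccos\sqrt{2/3}$ --- the opposite of the labels printed in the corollary (and repeated verbatim in the paper's one-line proof). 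Your own check against $A_0=(1/\sqrt{3},1/\sqrt{3},1/\sqrt{3})$ confirms that only the formula-given assignment works, since $\sin\omega$ must equal $1/\sqrt{3}$, not $1/\sqrt{2}$; with $\omega=\pi/4$ the $z$-coordinate would be $1/\sqrt{2}$. So your consistency check has in effect detected a typo in the statement (the roles of $\omega$ and $\varphi$ are swapped relative to Theorem~\ref{theor2}), and the Cartesian location you derive is the correct conclusion; it would strengthen your write-up to say this explicitly rather than present the assignment as an open choice.
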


\begin{proof}
By replacing $w_{1}=w_{2}=w_{3}$ in (\ref{explicit1}) and
(\ref{explicit2}) we derive that $\omega=\frac{\pi}{4}$ and
$\varphi=\arccos{\sqrt{\frac{2}{3}}}$ and we deduce the position
of the Fermat-Torricelli point
$A_{0}=\left(\frac{1}{\sqrt{3}},\frac{1}{\sqrt{3}},\frac{1}{\sqrt{3}}\right).$
\end{proof}


The geometric plasticity principle of quadrilaterals on a convex
surface M, which is also valid on the unit sphere, states that
(\cite{Zachos:13b}):

\begin{proposition}[Geometric plasticity Principle]{\cite[Theorem~3, Proposition~8]{Zachos:13b}}\label{geomplasticity}
Suppose that the weighted floating case of the weighted Fermat
point $A_{0}$ point with respect to $A_{1}A_{2}A_{3}A_{4}$ is
valid:
\[\left\|
w_{Q}\vec{U}_{RQ}+w_{S}\vec{U}_{RS}+w_{P}\vec{U}_{RP}\right\|>
w_{R},\] for each $R,Q,S,P\in\{A_{1},A_{2},A_{3},A_{4}\}.$ If
$A_{0}$ is connected with every vertex $R$ for
$R\in\{A_{1},A_{2},A_{3},A_{4}\}$ and we select a point
$R^{\prime}$ with non-negative weight $w_{R}$ which lies on the
shortest arc $RA_{0}$ and the quadrilateral
$A_{1}^{\prime}A_{2}^{\prime}A_{3}^{\prime}A_{4}^{\prime}$ is
constructed such that:
\[\left\|
w_{Q}\vec{U}_{R^{\prime}Q^{\prime}}+w_{S}\vec{U}_{R^{\prime}S^{\prime}}+w_{P}\vec{U}_{R^{\prime}P^{\prime}}\right\|>
w_{R^{\prime}},\] for
$,R^{\prime},Q^{\prime},S^{\prime},P^{\prime}\in\{A_{1}^{\prime},A_{2}^{\prime},A_{3}^{\prime},A_{4}^{\prime}\}.$
 Then the weighted Fermat-Torricelli point
$A_{0}^{\prime}$ is identical with $A_{0}.$
\end{proposition}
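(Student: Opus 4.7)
The plan is to use the first-order optimality characterization of the weighted Fermat-Torricelli point, which for the floating case takes the form of a vanishing weighted sum of unit tangent vectors emanating from the minimizer. This characterization is the quadrilateral analogue of condition (III) in Proposition~1 and follows by differentiating the objective $\sum_R w_R\, a_{0R}$ along any direction in the tangent plane at $A_{0}$; the directional derivative along a unit tangent vector $\vec{v}$ equals $-\vec{v}\cdot\sum_{R} w_{R}\vec{U}_{A_{0}R}$, so stationarity is equivalent to $\sum_{R} w_{R}\vec{U}_{A_{0}R}=\vec{0}$.

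First I would note that, since the floating-case hypothesis is assumed on $A_{1}A_{2}A_{3}A_{4}$, the point $A_{0}$ is interior and the vanishing-sum condition $w_{1}\vec{U}_{A_{0}A_{1}}+w_{2}\vec{U}_{A_{0}A_{2}}+w_{3}\vec{U}_{A_{0}A_{3}}+w_{4}\vec{U}_{A_{0}A_{4}}=\vec{0}$ holds at $A_{0}$. Next I would use the crucial geometric observation that, because $R^{\prime}$ lies on the shortest geodesic arc from $R$ to $A_{0}$, the geodesic from $A_{0}$ to $R^{\prime}$ is an initial segment of the geodesic from $A_{0}$ to $R$. Therefore the unit tangent vector at $A_{0}$ in the direction of $R^{\prime}$ coincides with the one in the direction of $R$, namely $\vec{U}_{A_{0}R^{\prime}}=\vec{U}_{A_{0}R}$ for each $R\in\{A_{1},A_{2},A_{3},A_{4}\}$.

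Substituting this identity into the vanishing-sum condition yields $\sum_{R}w_{R}\vec{U}_{A_{0}R^{\prime}}=\vec{0}$, which is precisely the first-order optimality condition for the weighted Fermat-Torricelli problem on $A_{1}^{\prime}A_{2}^{\prime}A_{3}^{\prime}A_{4}^{\prime}$. By the second hypothesis, the floating case also holds for the primed quadrilateral, so its weighted Fermat-Torricelli point $A_{0}^{\prime}$ is interior and uniquely characterized by the vanishing of this same weighted sum of unit tangent vectors. Invoking existence and uniqueness of the weighted Fermat point on a convex surface (\cite{Zach/Zou:08}, \cite{Cots/Zach:11}), we conclude $A_{0}=A_{0}^{\prime}$.

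The main obstacle I anticipate is justifying that $R^{\prime}$ genuinely lies on a single unambiguous minimizing geodesic from $A_{0}$ to $R$, so that $\vec{U}_{A_{0}R^{\prime}}=\vec{U}_{A_{0}R}$ is well defined; this requires the arcs $A_{0}R$ to be strictly shorter than $\pi$ and to stay inside a geodesically convex region, which is forced by the floating-case inequalities. A secondary technical point is verifying that the objective is smooth at the interior candidate $A_{0}$ (it is, since all $a_{0R}$ are strictly positive and bounded away from $\pi$), so that the formal derivation of the vanishing-sum condition on the sphere is rigorous and the uniqueness step actually applies.
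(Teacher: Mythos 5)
The paper does not prove this proposition at all—it is imported verbatim from \cite{Zachos:13b}—and your argument reproduces exactly the mechanism behind the cited proof: since each $R^{\prime}$ lies on the geodesic arc $RA_{0}$, the unit tangent vectors at $A_{0}$ are unchanged ($\vec{U}_{A_{0}R^{\prime}}=\vec{U}_{A_{0}R}$), so the equilibrium condition $\sum_{R}w_{R}\vec{U}_{A_{0}R}=\vec{0}$ persists for the primed quadrilateral, and the floating-case hypothesis together with uniqueness of the weighted Fermat--Torricelli point forces $A_{0}^{\prime}=A_{0}$. Your proposal is correct and follows essentially the same route, with the only points deserving explicit care being the quadrilateral analogue of condition (III) (which is indeed available in the cited framework) and excluding the degenerate choice $R^{\prime}=A_{0}$, where $\vec{U}_{A_{0}R^{\prime}}$ is undefined.
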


\begin{lemma}\label{imp2}
The geometric plasticity principle holds for a geodesic triangle
on the unit sphere.
\end{lemma}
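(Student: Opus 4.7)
The plan is to reduce the triangular geometric plasticity principle on the sphere directly to the floating-case characterization in Proposition 1 (III), exploiting the fact that truncating a geodesic arc emanating from $A_{0}$ does not change its initial direction at $A_{0}$. The triangular analogue of Proposition 3 to be established reads as follows: if the floating-case inequalities hold for $A_{0}$ with respect to $\triangle A_{1}A_{2}A_{3}$, and each $A_{i}^{\prime}$ (carrying the same weight $w_{i}$) is chosen on the shortest geodesic arc $A_{i}A_{0}$ so that the analogous inequalities still hold for $\triangle A_{1}^{\prime}A_{2}^{\prime}A_{3}^{\prime}$, then the weighted Fermat-Torricelli point of the new triangle coincides with $A_{0}$.

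First I would invoke Proposition 1 applied to the original triangle to extract the balance condition
\[
\vec{U}_{A_{0}A_{1}}+\vec{U}_{A_{0}A_{2}}+\vec{U}_{A_{0}A_{3}}=\vec{0}.
\]
The key geometric observation is that, since each $A_{i}^{\prime}$ lies on the minor arc of the great circle through $A_{0}$ and $A_{i}$ between these two points, the unit tangent at $A_{0}$ pointing toward $A_{i}^{\prime}$ agrees with the unit tangent pointing toward $A_{i}$; in symbols $\vec{U}_{A_{0}A_{i}^{\prime}}=\vec{U}_{A_{0}A_{i}}$ for $i=1,2,3$. Substituting this identity into the displayed equation yields
\[
\vec{U}_{A_{0}A_{1}^{\prime}}+\vec{U}_{A_{0}A_{2}^{\prime}}+\vec{U}_{A_{0}A_{3}^{\prime}}=\vec{0}.
\]
Combining this with the assumed floating-case inequalities for $\triangle A_{1}^{\prime}A_{2}^{\prime}A_{3}^{\prime}$ and appealing to the reverse direction of Proposition 1 identifies $A_{0}$ as the (unique) interior weighted Fermat-Torricelli point of $\triangle A_{1}^{\prime}A_{2}^{\prime}A_{3}^{\prime}$, i.e. $A_{0}^{\prime}=A_{0}$.

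The only delicate step I expect is justifying the tangent-invariance $\vec{U}_{A_{0}A_{i}^{\prime}}=\vec{U}_{A_{0}A_{i}}$: it requires that the arc $A_{i}A_{0}$ be unambiguously the shortest one (length strictly less than $\pi$), so that $A_{i}^{\prime}$ and $A_{i}$ lie on the same side of $A_{0}$ along a single minimizing great-circle geodesic. In the setting of this paper every relevant geodesic arc has length at most $\tfrac{\pi}{2}$, so this hypothesis is automatically met and the reduction to Proposition 1 closes the argument.
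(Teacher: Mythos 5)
Your argument is correct in substance, but it follows a genuinely different route from the paper. The paper disposes of the lemma in one line: it specializes the quadrilateral geometric plasticity principle (Proposition~3, from \cite{Zachos:13b}) by setting $w_{4}=0$, so the triangle case is inherited directly from the cited quadrilateral result. You instead re-derive the triangular plasticity principle from scratch out of the floating-case characterization (Proposition~1): the balance condition at $A_{0}$, the observation that truncating each geodesic arc $A_{0}A_{i}$ at $A_{i}^{\prime}$ leaves the unit tangent $\vec{U}_{A_{0}A_{i}}$ unchanged, and then the converse direction of Proposition~1 applied to $\triangle A_{1}^{\prime}A_{2}^{\prime}A_{3}^{\prime}$. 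This is essentially the proof idea behind the plasticity principle itself, so your version is self-contained where the paper's is a citation-plus-degeneration; the price is that you must lean on two points the paper does not have to touch. First, the balance condition should carry the weights, $w_{1}\vec{U}_{A_{0}A_{1}}+w_{2}\vec{U}_{A_{0}A_{2}}+w_{3}\vec{U}_{A_{0}A_{3}}=\vec{0}$ (the unweighted form printed in Proposition~1(III) is evidently a typo, and with unequal weights your substitution argument only works in the weighted form). Second, your ``reverse direction of Proposition~1'' step implicitly reads (III) as: any interior point at which the weighted balance holds \emph{is} the weighted Fermat--Torricelli point; this is the standard first-order-condition-plus-uniqueness argument and is valid here (all arcs have length at most $\pi/2$, so the objective is differentiable away from the vertices and the minimizer is unique), but it deserves to be said explicitly, since as literally stated (III) is a property of the minimizer rather than a test for an arbitrary point. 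With those two clarifications your proof closes; the paper's route simply avoids them by outsourcing everything to the quadrilateral statement with a zero weight.
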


\begin{proof}
By replacing $w_{4}=0$ in Proposition~\ref{geomplasticity}, we
deduce the geometric plasticity principle of a geodesic triangle
$\triangle A_{1}A_{2}A_{3}$ on the unit sphere.
\end{proof}


Let $\triangle A_{1}^{\prime}A_{2}^{\prime}A_{3}^{\prime}$ be a
geodesic triangle on the unit sphere , such that $A_{i}^{\prime}$
belongs to the geodesic arc $A_{0}A_{i},$ for $i=1,2,3,$ where
$A_{0}$ is the weighted Fermat-Torricelli point of $\triangle
A_{1}A_{2}A_{3}$ (Fig.~1).

We assume that $a_{12}^{\prime},a_{23}^{\prime},a_{31}^{\prime}\le
\frac{\pi}{2},$ in order to locate the geodesic triangle
$\triangle A_{1}^{\prime}A_{2}^{\prime}A_{3}^{\prime}$ at the
interior of $\triangle A_{1}A_{2}A_{3}.$

Furthermore, we assume that the same weight $w_{i}$ that
corresponds to the vertex $A_{i}$ corresponds to the vertex
$A_{i}^{\prime},$ for $i=1,2,3,$ such that the inequalities of the
weighted floating case hold (Proposition~1).

We denote by $A_{0}^{\prime}$ the corresponding weighted
Fermat-Torricelli point of $\triangle
A_{1}^{\prime}A_{2}^{\prime}A_{3}^{\prime}.$

We denote by $a$ the length of the geodesic arc
$A_{1}A_{1}^{\prime},$ by $b$ the length of the geodesic arc
$A_{2}A_{2}^{\prime}$ and by $c$ the length of the geodesic arc
$A_{3}A_{3}^{\prime}.$

\begin{proposition}\label{computeweightedFermatTorricelli}
The following system of three equations with respect to $a,$ $b$
and $c$ provide a necessary condition to locate the weighted
Fermat-Torricelli point $A_{0}\equiv A_{0}^{\prime}$ at the
interior of a geodesic triangle $\triangle
A_{1}^{\prime}A_{2}^{\prime}A_{3}^{\prime}$ on a unit sphere with
sides less than $\frac{\pi}{2}:$

\begin{equation}\label{constructa}
\cos a_{12}^{\prime}= \cos(a_{01}-a) \cos(a_{02}-b)+\sin(a_{01}-a)
\sin(a_{02}-b)\frac{w_{3}^2-w_{1}^2-w_{2}^2}{2w_{1}w_{2}},
\end{equation}

\begin{equation}\label{constructb}
\cos a_{23}^{\prime}= \cos(a_{02}-b) \cos(a_{03}-c)+\sin(a_{02}-b)
\sin(a_{03}-c)\frac{w_{1}^2-w_{2}^2-w_{3}^2}{2w_{2}w_{3}},
\end{equation}

and

\begin{equation}\label{constructc}
\cos a_{13}^{\prime}= \cos(a_{01}-a) \cos(a_{03}-c)+\sin(a_{01}-a)
\sin(a_{03}-c)\frac{w_{2}^2-w_{1}^2-w_{3}^2}{2w_{1}w_{3}}.
\end{equation}

\end{proposition}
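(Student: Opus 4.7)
The plan is to combine three ingredients already established in the paper: the geometric plasticity principle for geodesic triangles (Lemma~\ref{imp2}), the explicit formula for the angles at the weighted Fermat--Torricelli point (Lemma~\ref{theor1}), and the spherical cosine law applied in three auxiliary sub-triangles sharing the common vertex $A_{0}$.

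First I would invoke Lemma~\ref{imp2}. Since each $A_{i}^{\prime}$ is selected on the shortest arc $A_{0}A_{i}$ and the weighted floating-case inequalities are assumed to persist for the inner triangle $\triangle A_{1}^{\prime}A_{2}^{\prime}A_{3}^{\prime}$, the geometric plasticity principle gives $A_{0}^{\prime}\equiv A_{0}$. Two elementary consequences then follow from the fact that $A_{i}^{\prime}$ lies on the geodesic $A_{0}A_{i}$: length is additive along a common geodesic, so $a_{0i}^{\prime}=a_{0i}-a_{i}$ with $(a_{1},a_{2},a_{3})=(a,b,c)$; and the unit tangent at $A_{0}$ along the geodesic is the same whether one points toward $A_{i}$ or toward $A_{i}^{\prime}$, so the angles at the common vertex satisfy $\alpha_{i0j}^{\prime}=\alpha_{i0j}$ for every $i\neq j$.

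Next I would substitute the angle formula from Lemma~\ref{theor1}, which expresses $\cos\alpha_{i0j}$ as the rational function $(w_{k}^{2}-w_{i}^{2}-w_{j}^{2})/(2w_{i}w_{j})$ for the unique $k\notin\{i,j\}$. Finally, I would apply the spherical cosine law in each of the three geodesic sub-triangles $\triangle A_{i}^{\prime}A_{0}A_{j}^{\prime}$: the two sides incident to $A_{0}$ have lengths $a_{0i}-a_{i}$ and $a_{0j}-a_{j}$, the included angle is $\alpha_{i0j}$, and the opposite side is $a_{ij}^{\prime}$. Reading the three resulting identities for $(i,j)=(1,2),(2,3),(1,3)$ and inserting the cosines from Lemma~\ref{theor1} yields precisely equations (\ref{constructa}), (\ref{constructb}) and (\ref{constructc}).

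The argument is therefore conceptually direct; the only step that requires genuine care is verifying that the hypotheses of the plasticity principle are compatible with the standing assumption that the sides of $\triangle A_{1}^{\prime}A_{2}^{\prime}A_{3}^{\prime}$ are strictly less than $\pi/2$, so that the inner triangle sits in the interior of $\triangle A_{1}A_{2}A_{3}$ and the floating-case inequalities are preserved. Once this is granted, the three spherical cosine-law computations are routine and deliver the announced necessary condition on $(a,b,c)$.
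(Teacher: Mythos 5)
Your proposal is correct and follows essentially the same route as the paper: invoke the plasticity principle (Lemma~\ref{imp2}) to identify $A_{0}^{\prime}\equiv A_{0}$, then apply the spherical cosine law in the three sub-triangles $\triangle A_{i}^{\prime}A_{0}A_{j}^{\prime}$ with the angles $\alpha_{i0j}$ supplied by Lemma~\ref{theor1} and the sides $a_{0i}-a$, $a_{02}-b$, $a_{03}-c$. The only detail the paper adds beyond your argument is the closing remark that $a_{01},a_{02},a_{03}$ are themselves determined by the weights via Theorem~\ref{theor2}, which is why the three equations constitute a system in $a,b,c$ alone.
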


\begin{proof}

From lemma~2, the geometric plasticity holds on the units sphere.
Therefore, $A_{0}=A_{0}^{\prime}.$ By applying the cosine law in
$\triangle A_{1}^{\prime}A_{0}A_{2}^{\prime},$ $\triangle
A_{2}^{\prime}A_{0}A_{3}^{\prime}$ and $\triangle
A_{1}^{\prime}A_{0}A_{3}^{\prime},$ we obtain (\ref{constructa}),
(\ref{constructb}) and (\ref{constructc}), respectively. The
equations (\ref{constructa}), (\ref{constructb}) and
(\ref{constructc}) yield a system of three equations with respect
to $a,$ $b$ and $c,$ because $a_{01},$ $a_{02}$ and $a_{03}$ could
be expressed explicitly as functions of $w_{1},$ $w_{2}$ and
$w_{3}$ taking into consideration the exact location of $\triangle
A_{1}A_{2}A_{3}$ which has been given in Theorem~\ref{theor2}.

\end{proof}

\begin{remark}
By replacing the Weirstrass transformations $\sin a=\frac{2
t_{a}}{1+t_{a}^{2}},$ $\cos a=\frac{1-t_{a}^{2}}{1+t_{a}^{2}},$
$\sin b=\frac{2 t_{b}}{1+t_{b}^{2}},$ $\cos
b=\frac{1-t_{b}^{2}}{1+t_{b}^{2}},$ $\sin c=\frac{2
t_{c}}{1+t_{c}^{2}},$ $\cos c=\frac{1-t_{c}^{2}}{1+t_{c}^{2}},$ in
(\ref{constructa}), (\ref{constructb}) and (\ref{constructc}) we
get a system of three rational equations with respect to $t_{a},$
$t_{b}$ and $t_{c}.$ By solving the first derived equation of
second degree with respect to $t_{a},$ we may obtain two solutions
$t_{a_{1}}=f_{1}(t_{b})$ and $t_{a_{2}}=f_{2}(t_{b}).$ Similarly,
by solving the second derived equation of second degree with
respect to $t_{c},$ we may obtain two solutions
$t_{c_{1}}=f_{1}(t_{b})$ and $t_{c_{2}}=f_{2}(t_{b}).$

By replacing these pairs of solutions with respect to
$(t_{a}(t_{b}),t_{c}(t_{b}))$ in the third derived equation we
obtain a rational equation which depend only on $t_{b}.$

\end{remark}

The author is grateful to Professor Dr. Vassilios Papageorgiou for
many fruitful discussions and for his comments on this particular
problem.

\end{document}